\newcommand{\ourtitle}[1]{\title{#1}\newcommand{\ourtitlex}{#1}}
\crefname{subsection}{subsection}{subsections}
\Crefname{subsection}{Subsection}{Subsections}
\let\cref\Cref
\definecolor{darkblue}{RGB}{0,0,0}
\definecolor{gray}{RGB}{127,127,127}
\definecolor{darkred}{RGB}{160,0,0}
\definecolor{lightyellow}{RGB}{255,255,128}
\newcommand{\qua}{\hskip 0.4em \ignorespaces}
\def\arxiv#1{\relax\ifhmode\unskip\qua\fi
\href{http://arxiv.org/abs/#1}%
{\tt arXiv:\penalty -100\unskip#1}}    
\def\MR#1{\relax\ifhmode\unskip\qua\fi
\href{http://www.ams.org/mathscinet-getitem?mr=#1}{\tt MR#1}}
\def\xox#1{\csname xx#1\endcsname}
\crefname{equation}{}{}
\newtheorem{theorem}{Theorem}
\newtheorem{prop}[theorem]{Proposition}
\newtheorem{lemma}[theorem]{Lemma}
\theoremstyle{remark}
\newtheorem{example}[theorem]{Examples}
\DeclareMathOperator{\sgn}{sgn}
\author{Sebastian Baader}
\address{Mathematisches Institut\\
Universit\"at Bern\\
Sidlerstrasse 5\\
3012 Bern \\
Switzerland}
\email{sebastian.baader@math.unibe.ch}
\author{Lukas Lewark}
\address{Mathematisches Institut\\
Universit\"at Bern\\
Sidlerstrasse 5\\
3012 Bern \\
Switzerland}
\email{lukas@lewark.de}
\begin{document}
\maketitle
\begin{abstract}
We show that the difference between the genus and the stable topological 4-genus of alternating knots is either zero or at least 1/3.
\end{abstract}
\section{Introduction}
The topological 4-genus $g_t(K)$ of a knot $K \subset S^3$ is defined by minimising the genus over all locally flat surfaces $\Sigma \subset B^4$ with boundary $\partial \Sigma=K$. Determining the topological 4-genus is a hard problem, in general. The only workable lower bound is Kauffman-Taylor's signature bound~\cite{KaTa}: $\sigma(K) \leq 2g_t(K)$.
A remarkable upper bound was recently derived by Feller~\cite{Fe}: $g_t(K) \leq \text{maxdeg}(\Delta_K(t))$, where $\Delta_K(t)$ denotes the Alexander polynomial of $K$. This can be seen as a generalisation of Freedman's disc theorem~\cite{freedman2,freedman}. Somewhat more is known when $g_t(K)$ is replaced by its stable version defined by Livingston~\cite{stable}:
$$\widehat{g}_t(K)=\lim_{n \to \infty} \frac{1}{n} g_t(K^n),$$
where $K^n$ denotes the $n$-times iterated connected sum of $K$. For example, $\widehat{g}_t(K)$ is strictly smaller than the classical genus $g(K)$ for all positive braid knots with non-maximal signature invariant~\cite{baader}. In this paper, we exhibit a quantitative gap between $g(K)$ and $\widehat{g}_t(K)$ in the case of alternating non-positive knots. Let us point out that this difference is zero for positive alternating knots, since these have maximal signature invariant.
In fact, the result holds more generally for homogeneous knots, a class of knots introduced by Cromwell \cite{cromwell} that includes alternating knots. 
Namely, a knot is called \emph{homogeneous} if it admits a homogeneous knot diagram, i.e. a diagram in which for every Seifert circle $S$, all crossings adjacent to $S$ inside of $S$ have the same sign,
as do all crossings adjacent to $S$ outside of $S$.
\begin{theorem}\label{thm:main}
Let $K$ be a prime homogeneous knot, and suppose neither $K$ nor its mirror image is a positive knot.
Then $g(K) - \widehat{g_t}(K) \geq 1/3$.
\end{theorem}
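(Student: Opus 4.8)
The plan is to prove the equivalent upper bound $\widehat{g_t}(K)\le g(K)-1/3$, and to reduce everything to the single inequality $g_t(K^3)\le 3g(K)-1$. This reduction is immediate from subadditivity of the topological $4$-genus under connected sum: from $g_t(K^{3m})\le m\,g_t(K^3)$ one gets $\widehat{g_t}(K)=\inf_n \tfrac1n g_t(K^n)\le \tfrac13 g_t(K^3)\le g(K)-1/3$, which is exactly $g(K)-\widehat{g_t}(K)\ge 1/3$. So the entire content is concentrated in extracting one unit of genus from three copies of $K$.

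\textbf{Baseline and the quantity to improve.} Since $K$ is homogeneous, Cromwell's analysis shows that Seifert's algorithm on a homogeneous diagram yields a genus-minimising surface and that $\text{maxdeg}(\Delta_K)=g(K)$; Feller's bound then only gives $g_t(K^3)\le 3g(K)$, one short of the target. To do better I would not argue with the Alexander polynomial directly but with the Seifert form $V$ of the homogeneous surface, through the algebraic genus: if a Seifert form of a knot $J$ carries a primitive isotropic direct summand $P$ of rank $r$ (a summand with $V|_{P\times P}=0$), then $J$ bounds a locally flat surface of genus $g(J)-r$ in $B^4$. This Seifert-form refinement of Feller's inequality is proved by surgering the Seifert surface along curves representing $P$ and capping the resulting algebraically trivial piece by Freedman's disc theorem, exactly as Feller's bound generalises Freedman's theorem. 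It therefore suffices to produce a rank-one isotropic direct summand in a Seifert form of $K^3$.

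\textbf{Non-positivity and the number three.} The hypothesis enters through the symmetrised form. Because $K$ is homogeneous, its homogeneous surface is an iterated Murasugi sum of the definite surfaces of its blocks, and signatures add over the blocks, one sign per block according to the sign of its crossings. If neither $K$ nor its mirror is positive, a homogeneous diagram cannot have all crossings of one sign, so both signs of block occur and the symmetrised Seifert form $V+V^{\top}$ is \emph{indefinite}, i.e. $|\sigma(K)|<2g(K)$. Now a rational isotropic vector $x$ of $V^{\oplus 3}$ is the same as an isotropic vector of $(V+V^{\top})^{\oplus 3}$, since $x^{\top}Vx=\tfrac12\,x^{\top}(V+V^{\top})x$. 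The symmetrised form of $K^3$ has rank $6g(K)\ge 6$ and is indefinite, so by Meyer's theorem — an indefinite rational quadratic form in at least five variables represents zero — it is isotropic. This is where three is forced: one copy may have only two variables and need not be isotropic over $\mathbb{Q}$ (the symmetrised figure-eight form has determinant $-5$, hence is anisotropic), and four variables still need not suffice, whereas three copies always provide the five variables Meyer's theorem requires. Clearing denominators and dividing out the content yields a primitive integral isotropic vector, hence a rank-one isotropic direct summand, and by the previous paragraph $g_t(K^3)\le 3g(K)-1$.

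\textbf{Main obstacle.} The delicate point is the passage from the rational isotropic vector produced by Meyer's theorem to an honest genus-reducing locally flat surface. Isotropy must be upgraded from the rational symmetrised form to the integral Seifert form, and the resulting summand must be compatible with the Blanchfield pairing so that the complementary piece really is the Seifert form of a lower-genus knot; it is this $\mathbb{Z}[t^{\pm1}]$-level datum, rather than the merely rational signature seen by Meyer's theorem, that Feller's bound and the algebraic genus actually record. Controlling this finer structure is what I expect to be the real difficulty, and it is presumably also the reason the guaranteed gain is exactly one unit over three copies rather than more. Alongside this, the structural inputs from Cromwell have to be checked — minimality of the homogeneous surface, $\text{maxdeg}(\Delta_K)=g(K)$, and the block decomposition giving indefiniteness precisely in the non-positive case — with primeness ensuring the non-degeneracy of $V$ that these statements require.
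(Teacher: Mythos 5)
Your global reduction ($\widehat{g_t}(K)\le\tfrac13 g_t(K^3)$, so it suffices to show $g_t(K^3)\le 3g(K)-1$) and the idea of hunting for an isotropic vector in three copies of the Seifert form do match the paper's strategy. But your key lemma --- that a primitive rank-one isotropic direct summand of a Seifert form forces $g_t(J)\le g(J)-1$ --- is false, and the whole argument rests on it. The surgery-plus-Freedman construction needs strictly more than isotropy: to cap off a genus-one subsurface by Freedman's disc theorem, the rank-two block containing the isotropic vector $v$ must have trivial Alexander polynomial, i.e. be $\begin{pmatrix}0&1\\0&x\end{pmatrix}$ after a change of basis. Given only $V(v,v)=0$ and a dual class $w$ of intersection number one, the block is $\begin{pmatrix}0&a+1\\a&x\end{pmatrix}$, whose Alexander polynomial is a unit only when $a\in\{0,-1\}$, and no choice of $w$ need achieve this. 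Concretely, take the twist knot $K_6$ with Seifert matrix $\begin{pmatrix}1&1\\0&-6\end{pmatrix}$: the primitive vector $(2,1)$ is isotropic, yet every dual $w$ yields the block $\begin{pmatrix}0&-2\\-3&*\end{pmatrix}$; and indeed $K_6$ is \emph{not} topologically slice by Casson--Gordon (cited in the paper: $K_n$ is slice iff $n\in\{0,2\}$), even though its Alexander polynomial $(3t-2)(2t-3)$ passes Fox--Milnor. At full rank your claimed ``Seifert-form refinement of Feller's inequality'' would even say that algebraically slice knots are topologically slice, which is exactly what Casson--Gordon disproved. So isotropic summands simply do not control the topological genus defect.

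This is also why Meyer's theorem cannot be the engine: passing to the symmetrized form discards precisely the off-diagonal, $\mathbb{Z}[t^{\pm1}]$-level data that Freedman's theorem requires, and the rational isotropic vector Meyer produces comes with no control over its Seifert pairing against a dual class. The paper keeps that control by hand. Its Lemma 2(ii) (this is where primeness and homogeneity are really used, not merely for indefiniteness) produces a block $\begin{pmatrix}p&1\\0&-n\end{pmatrix}$ whose off-diagonal zero is already in place; then in $V^{\oplus 3}$ it takes the explicit pair $u=(1,0,x_1,y_1,x_2,y_2)$ and $w=(0,1,0,0,0,0)$, so that $V(w,u)=0$ and $V(u,w)=1$ hold automatically, and the only remaining condition is isotropy of $u$, i.e. that the four-variable form $\sum_{i}px_i^2+x_iy_i-ny_i^2$ represents $-p$. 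Four variables are too few for Meyer, which is exactly why the paper needs its number-theoretic Lemma 3: a Hasse-principle argument (Watson's theorem) with genuine local analysis at every prime power. Your ``main obstacle'' paragraph correctly senses that the Blanchfield/Alexander-level structure is the crux, but in your setup it is not a verification to be added later --- it is the information your two main tools (isotropic summands and Meyer's theorem) have already thrown away, so the proof as proposed cannot be completed without being rebuilt along the paper's lines.
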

We do not know whether an equivalent statement holds if $g_t(K)$ is replaced by the smooth slice genus $g_s(K)$, which is defined by minimising the genus over all smooth surfaces $\Sigma \subset B^4$ with boundary $\partial \Sigma=K$.
However, we are still able to prove the following qualitative version.
\begin{theorem}\label{thm:smooth}
Let $K$ be a homogeneous (not necessarily prime) knot,
and suppose neither $K$ nor its mirror image is a positive knot.
Then $g(K) - \widehat{g_s}(K) > 0$.
\end{theorem}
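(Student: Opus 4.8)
The plan is to bound the smooth stable genus from above by exhibiting, for a suitable power $m$, a smooth surface in $B^4$ bounded by $K^m$ of genus $m\,g(K)-1$. Since the smooth slice genus is subadditive under connected sum \cite{stable}, writing $n=qm+r$ with $0\le r<m$ gives $g_s(K^n)\le q\,g_s(K^m)+r\,g_s(K)\le n\,g(K)-q$; dividing by $n$ and letting $n\to\infty$ then yields $\widehat{g_s}(K)\le g(K)-1/m<g(K)$, as required. So everything reduces to the single inequality $g_s(K^m)\le m\,g(K)-1$ for some $m$.

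To set this up, I would fix a homogeneous diagram of $K$ with associated Seifert surface $\Sigma$. By Cromwell's work \cite{cromwell}, $\Sigma$ realises the genus and is a Murasugi sum of the surfaces of the blocks of its Seifert graph, each block being a same-sign (indeed special alternating) piece whose symmetrised Seifert form is definite. As neither $K$ nor its mirror is positive, both a positive and a negative block occur. In each block a simple cycle of the Seifert graph is carried on $\Sigma$ by a simple closed curve which, following the flat planar pattern of the diagram, is \emph{unknotted} in $S^3$; choosing such curves produces an unknot $a$ with Seifert self-pairing $V(a,a)=-k_a<0$ in a positive block and an unknot $b$ with $V(b,b)=k_b>0$ in a negative block, for some integers $k_a,k_b\ge 1$.

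The key step is a genus-reducing compression in $B^4$. Set $m=k_a+k_b$ and consider $K^m$ with the boundary connected sum surface $\Sigma^{(m)}$ of genus $m\,g(K)$. Take the curve $a$ in each of $k_b$ of the summands and the curve $b$ in each of the remaining $k_a$ summands; as these lie in distinct connected summands they are pairwise unlinked, so their band sum $e$ on $\Sigma^{(m)}$ is again an unknot in $S^3$, with additive self-pairing $V(e,e)=k_b\,(-k_a)+k_a\,(k_b)=0$. Thus $e$ is a $0$-framed unknot and bounds a smooth disc $D\subset B^4$ whose normal framing matches the surface framing of $e$; since $[e]\ne 0$ is primitive and hence non-separating, compressing $\Sigma^{(m)}$ along $D$ produces a smooth surface for $K^m$ of genus $m\,g(K)-1$, which would complete the argument.

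The main obstacle, and the only genuinely geometric point, is the claim that each block supplies a homology generator represented by an \emph{unknotted} curve in $S^3$: this is what lets the canceling class $e$ bound an honest smooth disc. I expect to establish it from the special, coherently nested structure of the blocks of a homogeneous diagram, for which simple cycles are visibly planar and therefore unknotted. It is precisely this demand for smooth unknottedness that confines us to a knot-dependent gap $1/m$ with no universal lower bound: in the topological category one may instead compress along any class of trivial Alexander polynomial---a far weaker condition granted by Freedman's disc theorem \cite{freedman}---and it is this extra flexibility that upgrades the conclusion to the uniform bound $1/3$ of \cref{thm:main}.
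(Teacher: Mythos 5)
Your proposal is correct and follows essentially the same route as the paper: produce unknotted simple closed curves of positive and negative self-framing on the canonical (genus-minimizing) Seifert surface of a homogeneous diagram, band-sum suitably many copies of each across the summands of $\Sigma^{(m)}$ to obtain a $0$-framed unknotted non-separating curve, and surger along its pushed-in slice disc to reduce the genus of a surface for $K^m$ by one, giving $\widehat{g_s}(K)\le g(K)-1/m<g(K)$. The geometric claim you flag as the remaining obstacle is exactly the paper's \cref{lem:hom}(i), proved there by the same planarity argument you sketch: any simple cycle in the embedded graph formed by the Seifert-circle boundaries and band cores projects homeomorphically to a plane curve, hence is unknotted, and its induced framing is the sum of the signs of the crossings it traverses, so homogeneity yields curves of both signs.
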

Livingston constructed families of (non-homogeneous) knots with arbitrarily small positive difference $g - \widehat{g_s}$.
We do not know if such a family exists in the topological setting.

The proof of \cref{thm:main} occupies the main part of this paper.
It is based on Freedman's disc theorem, a version of the Hasse principle
and an elementary fact about surface mapping classes.

\emph{Acknowledgements:} We thank Thomas Huber for showing us how to find zeros of indefinite quadratic forms in few variables.
We also thank Peter Feller for comments on a first version of this paper.

\section{Proofs of the main theorems}
The proof of \cref{thm:main} is split into three lemmas.
\begin{lemma}\label{lem:hom}
Let $D$ be a homogeneous reduced diagram of a knot $K$ with positive and negative crossings,
and $\Sigma$ the Seifert surface obtained by Seifert's algorithm.
\begin{enumerate}\renewcommand{\theenumi}{\roman{enumi}}
\item The surface $\Sigma$ contains two unknotted simple closed curves $\gamma_{\pm}$
of positive and negative induced framing, respectively.
\item If $D$ is prime, $\Sigma$ contains two unknotted  simple closed curves $\gamma_{\pm}$
such that the Seifert form on $\langle [\gamma_+],[\gamma_-]\rangle$ has the following matrix with $p, n > 0$:
\[
\begin{pmatrix}
p & 1 \\
0 & -n
\end{pmatrix}.
\]
\end{enumerate}
\end{lemma}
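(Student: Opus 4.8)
The plan is to reduce everything to the combinatorics of the \emph{Seifert graph} $G$ associated with $D$: its vertices are the Seifert circles and its edges are the crossings, each carrying the sign of the corresponding crossing. The surface $\Sigma$ deformation retracts onto $G$, so $H_1(\Sigma)$ is generated by cycles of $G$, and a cycle $\gamma$ running through bands all of one sign is the core of a plumbing of Hopf bands of that sign; in particular such a $\gamma$ is unknotted, and the Seifert form restricted to the span of the cycles of a single monochromatic $2$-connected piece is sign-definite — positive on positive pieces, negative on negative ones. First I would invoke the fact, going back to Cromwell, that homogeneity of $D$ forces each block (maximal $2$-connected subgraph) of $G$ to be monochromatic. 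Since reducedness forbids nugatory crossings, every Seifert circle is incident to at least two crossings, so $G$ has minimal degree at least $2$; from this I would deduce that each sign occurring in $D$ is realised on a monochromatic block carrying a cycle. As $D$ has crossings of both signs, I then obtain a positive block $B_+$ and a negative block $B_-$, and choosing cycles $\gamma_+\subset B_+$ and $\gamma_-\subset B_-$ and reading off the diagonal of the sign-definite Seifert form on each block yields unknotted curves of positive and negative induced framing, which is \cref{lem:hom}(i).

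For part (ii) I would first upgrade the choice of blocks using primality. If no positive block shared a Seifert circle with a negative block, then the positive and negative crossings would be separated by a circle in the plane meeting $D$ in two points, exhibiting $D$ as a connected sum and contradicting primality; hence some $B_+$ and $B_-$ meet in a common cut vertex, i.e.\ a Seifert circle $S$ carrying both a positive and a negative crossing. I would then route $\gamma_+$ through $B_+$ and $\gamma_-$ through $B_-$ so that each passes once through the disk $D_S$. On $D_S$ the two arcs cross transversely, and homogeneity — which places the positive and negative crossings at $S$ on opposite sides of $S$ — guarantees that the four arc-endpoints alternate around $\partial D_S$, so the algebraic intersection number of $\gamma_+$ and $\gamma_-$ on $\Sigma$ is exactly $\pm 1$.

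Finally I would compute the $2\times 2$ Seifert matrix on $\langle[\gamma_+],[\gamma_-]\rangle$. The diagonal entries are $V(\gamma_+,\gamma_+)=p>0$ and $V(\gamma_-,\gamma_-)=-n<0$ by the definiteness from part (i). The curves interact only near $D_S$, where they cross once, so $V(\gamma_+,\gamma_-)-V(\gamma_-,\gamma_+)=\pm 1$; after possibly reversing the orientation of one curve and choosing the side of the pushoff, the single clasp can be arranged to contribute to just one of the two off-diagonal entries, giving $V(\gamma_+,\gamma_-)=1$ and $V(\gamma_-,\gamma_+)=0$, the asserted matrix. I expect the main obstacle to be exactly the combinatorial control underlying the definiteness and unknottedness claims together with this last normalisation: verifying that a monochromatic cycle is genuinely an unknot of the predicted framing sign, that every sign is realised on a cyclic block, and that the lone transverse intersection at $D_S$ can be made to fill one off-diagonal entry rather than distributing between both, is where the geometry of the band attachments must be tracked carefully.
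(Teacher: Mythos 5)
Your part (i) reaches the right conclusion, but the two steps you flag as delicate are exactly where your sketch diverges from what works. First, ``minimal degree at least $2$'' is not the right consequence of reducedness and does not yield your conclusion: a Seifert graph made of two positive triangles joined by a single negative edge has minimal degree $2$, yet the negative sign lies on no cycle. What reducedness actually gives is that no crossing is nugatory, i.e.\ the Seifert graph has no bridge, so \emph{every} edge lies on a cycle inside its (monochromatic) block; that is the implication you need. Second, unknottedness has nothing to do with Hopf plumbings and in fact holds for \emph{all} cycles, monochromatic or not: realise the cycle inside the union of the Seifert-disc boundaries and the band cores; this graph projects injectively to the plane, so the curve admits a crossingless diagram. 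This is essentially the paper's argument, which works with boundaries of complementary regions of that projected graph.

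The genuine gap is in part (ii). Your claim that homogeneity ``guarantees that the four arc-endpoints alternate around $\partial D_S$'' is false: homogeneity dictates only from which \emph{side} of $S$ each band attaches, and says nothing about the cyclic order of the attachment points along $S$ --- and it is that cyclic order which decides whether the two arcs in $D_S$ must cross. Concretely, take the square knot $3_1\#\overline{3_1}$ drawn as the closure of $\sigma_1^3\sigma_2^{-3}$: this diagram is reduced, homogeneous and connected, and its positive and negative blocks share the middle Seifert circle, so your construction applies verbatim (note that your shared-cut-vertex step needs only connectivity, not primality --- a warning sign that primality does no work in your proof). Yet around that circle the three positive feet are consecutive and the three negative feet are consecutive, so $\gamma_+$ and $\gamma_-$ can be made disjoint; indeed the Seifert form splits as a direct sum, and no curves carried by the two blocks can have intersection number $\pm1$, so the matrix $\left(\begin{smallmatrix}p&1\\0&-n\end{smallmatrix}\right)$ is unattainable. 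Your argument would thus prove a false statement for this non-prime diagram. (Nor can you slide the feet to force alternation: interleaving them changes the link --- the interleaved diagram closes up to $(\sigma_1\sigma_2^{-1})^3$, the Borromean rings, not a knot.) The missing step, which is precisely where the paper spends primality, is combinatorial: one must produce a Seifert circle $S$ and four crossings $c_1,c_2,c_3,c_4$ adjacent to it \emph{in this cyclic order} with signs $+,-,+,-$; if instead the positive crossings at $S$ and the negative crossings at $S$ each formed one consecutive block, one could build a simple closed curve meeting $D$ in two points with crossings on both sides, contradicting primality. Only with this alternating quadruple can one choose curves (in the paper, the lifted boundaries of the two regions facing $c_2$ and $c_3$ from across $S$) whose arcs in $D_S$ have alternating endpoints and hence intersect once; and even then, pinning the off-diagonal entries to exactly $1$ and $0$, rather than merely $V_{12}-V_{21}=\pm1$, requires the further observation that the two projections cross only twice.
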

\begin{proof}
The Seifert surface $\Sigma$ consists of Seifert discs and twisted bands,
the latter being in one-to-one correspondence with the crossings of $D$.
Let $\Gamma\subset\Sigma$ be the union of the boundaries of the Seifert discs and the cores of the twisted bands.
Note that $\Gamma$ is a trivalent graph embedded into $\Sigma \subset\mathbb{R}^3$.
Let $\pi:\mathbb{R}^3\to\mathbb{R}^2$ be the projection that sends $K$ to $D$.
Then $\pi|_{\Gamma}$ is a homeomorphism onto its image, i.e. $\pi|_{\Gamma}$ is a drawing
of $\Gamma$ as the plane graph $\pi(\Gamma)$.
Let $U$ be a connected component of $\mathbb{R}^2\setminus \pi(\Gamma)$.
Since $D$ is reduced, $\partial U$ is a simple closed curve in $\pi(\Gamma)\subset\mathbb{R}^2$.
It lifts to a simple closed curve in $\Gamma\subset\Sigma$, which we denote
by $\gamma_U$. Since $\pi(\gamma_U)$ is a crossingless knot diagram of $\gamma_U$,
this curve is unknotted.
\begin{figure}[b]
\hspace{1cm}
\parbox[b]{.45\textwidth}{%
(a) 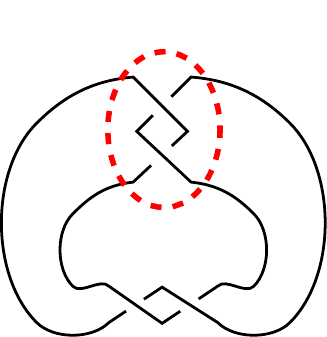 \\[7ex]
(b) 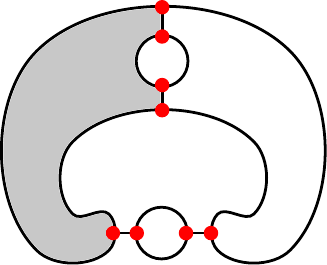}
\hfill
\parbox[b]{.45\textwidth}{%
(c) \hspace{-90pt}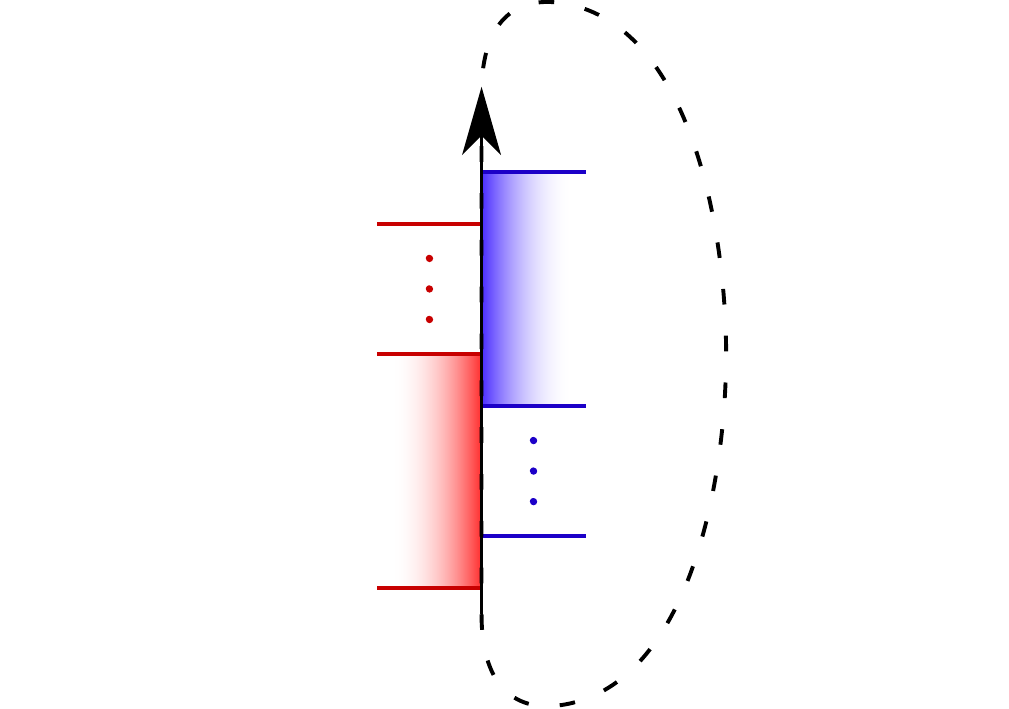}\hfill\mbox{}\\
\caption{
\emph{(a)} A diagram of the figure-eight knot.
\emph{(b)} The graph $\pi(\Gamma)$ with some component $U$ of $\mathbb{R}^2\setminus\pi(\Gamma)$ highlighted in grey.
The two crossings contained in $C(U)$ are circled in the knot diagram shown in (a).
\emph{(c)} The construction used in the proof of \cref{lem:hom}(ii).}
\label{fig:yeah}
\end{figure}

Let $C(U) \subset D$ be the set of those crossings $c$ of $D$
for which $\partial U$ contains the edge of $\pi(\Gamma)$ corresponding to $c$;
or, equivalently, of those crossings $c$ for which $\gamma_U$ passes through the twisted band corresponding to $c$.
An annulus around $\gamma_U$ in $\Sigma$
twists whenever $\gamma_U$ passes through a twisted band.
So  the induced framing of $\gamma_U$ in $\Sigma$ equals
\[
\sum_{c\in C(U)} \sgn(c).
\]
See \Cref{fig:yeah} for an example.

To construct a curve $\gamma_+$ with positive induced framing for part (i) of the lemma,
pick an edge in $\Gamma$ corresponding to a positive crossing.
Let $U$ be a connected component of $\mathbb{R}^2\setminus \pi(\Gamma)$ whose boundary contains this edge.
Since $D$ is homogeneous, all crossings contained in $C(U)$ are of the same sign.
Therefore, $\gamma_+ = \gamma_U$ has positive induced framing.
The curve $\gamma_-$ may be constructed analogously.

Now let us assume that $D$ is prime and prove part (ii).
Because $D$ has crossings of both signs and is connected,
there is a Seifert circle $S$ such that the set $C(S)$ of crossings
adjacent to $S$ contains positive and negative crossings.
The orientation of $S$ endows $C(S)$ with a cyclic ordering.
Let $c_1\in C(S)$ be a positive crossing with negative successor (with respect to that ordering).
Let $c_2$ be the first crossing after $c_1$ with positive successor $c_3$.
Let $c_4$ be the first negative crossing after $c_3$. \cref{fig:yeah}(b) illustrates the choice of the $c_i$.

We shall now prove that $c_1, c_2, c_3, c_4$ are four distinct crossings
that lie on $S$ in that order.
Assume the contrary.
Then there is only one block of positive and negative crossings of $C(S)$ each,
which contradicts the primality of $D$; more precisely, pick $x,y\in S\setminus C(S)$,
namely $x$ between $c_1$ and its successor, and $y$ between $c_3$ and its predecessor.
Then $S\setminus\{x,y\}$ splits into
two components, one containing all positive,
the other containing all negative crossings of $C(S)$.
The complement of $S$ in $\mathbb{R}^2$ consists of two connected components.
In each of them, $x$ and $y$ may be connected by a path that avoids $D$.
The union of these two paths is a circle whose intersection with $D$ is $\{x,y\}$,
and which contains crossings of $D$ on either side; in other words, $D$ is not prime.

Now let $U_+$ be the connected component of $\mathbb{R}^2\setminus \pi(\Gamma)$
that lies vis-\`{a}-vis of $c_2$, on the other side of $S$, and similarly $U_-$
the component that lies vis-\`{a}-vis of $c_3$.
Let $\gamma_+ = \gamma_{U_+}$ and $\gamma_- = \gamma_{U_-}$.
Then the diagonal entries of the matrix of the Seifert form of $\Sigma$ on $\langle[\gamma_+],[\gamma_-]\rangle$ have the desired form,
because the associated curves  have positive and negative induced framing, respectively.
Furthermore, after a small generic isotopy, the curves have one geometric intersection point, whereas the projections
$\pi(\gamma_{\pm}) \subset \mathbb{R}^2$ intersect twice. This implies that one of the off-diagonal entries
vanishes, and the other one is $\pm 1$.
Switching the orientations of $\gamma_+$ and of $\Sigma$ if necessary establishes the desired matrix.
\end{proof}
\begin{lemma}\label{lem:numbertheory}
Let $p, n > 0$ be given. Then there are $x_1, x_2, y_1, y_2 \in \mathbb{Z}$ such that
\begin{equation}\tag{$\dagger$}\label{eq:1}
\sum_{i=1}^2 px_i^2 + x_iy_i - ny_i^2 = -p.
\end{equation}
\end{lemma}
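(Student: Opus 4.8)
The plan is to read the left-hand side of \eqref{eq:1} as the quaternary integral quadratic form
$q(x_1,y_1,x_2,y_2)=Q(x_1,y_1)+Q(x_2,y_2)$, where $Q(x,y)=px^2+xy-ny^2$ is the binary form whose matrix appears in \cref{lem:hom}(ii), and to show that $q$ \emph{represents} the integer $-p$. Since the discriminant of $Q$ equals $1+4pn>0$, the form $Q$ is indefinite of signature $(1,1)$, so $q$ is nondegenerate of signature $(2,2)$, indefinite and of rank $4$. For such forms there is a local--global principle (the ``version of the Hasse principle'' announced in the introduction): an indefinite integral quadratic form of rank at least $4$ represents a nonzero integer $m$ over $\mathbb{Z}$ as soon as it represents $m$ over $\mathbb{R}$ and over $\mathbb{Z}_\ell$ for every prime $\ell$. (The point is that in rank $\geq 4$ there are no spinor-exceptional integers, so everything represented by the genus is represented by the form; this follows from the theory of spinor genera for indefinite forms, as in the work of Eichler and Kneser.) I would invoke this theorem and thereby reduce the lemma to a purely local verification.

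The archimedean place is immediate: $q$ is indefinite, hence represents every real number, in particular $-p$. For an odd prime $\ell$ I would use only the ``diagonal'' part of $q$: since $Q(s,0)+Q(t,0)=p(s^2+t^2)$, it suffices to solve $s^2+t^2=-1$ over $\mathbb{Z}_\ell$. The conic $s^2+t^2=-1$ has a point over the finite field $\mathbb{F}_\ell$ for every odd $\ell$, and its gradient $(2s,2t)$ is nonzero at any such point, so Hensel's lemma lifts the solution to $\mathbb{Z}_\ell$; then $p(s^2+t^2)=-p$ as required. This single argument disposes of all odd primes at once, including those dividing $p$, $n$ or the discriminant.

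The prime $\ell=2$ is the main obstacle, because $-1$ is not a sum of two squares in $\mathbb{Z}_2$ and the diagonal argument breaks down. Here I would exploit precisely the off-diagonal entry $1$ furnished by \cref{lem:hom}(ii): setting $y_1=1$ creates a \emph{linear} term, $Q(x_1,1)=px_1^2+x_1-n$, whose derivative $2px_1+1$ in $x_1$ is a $2$-adic unit everywhere. Fixing $y_2=0$, the equation $Q(x_1,1)+Q(x_2,0)=-p$ becomes $h(x_1):=px_1^2+x_1+(px_2^2-n+p)=0$ with $h'(x_1)$ a unit, so by the strong form of Hensel's lemma it has a root $x_1\in\mathbb{Z}_2$ as soon as $h$ has a root modulo $2$. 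A short case distinction on the parity of $p$ (reducing $h$ modulo $2$ via $z^2\equiv z$ and choosing $x_2\in\{0,1\}$, and in the even case also the residue of $x_1$, accordingly) produces such a root. This establishes local solvability at $2$, and together with the previous two paragraphs and the local--global principle it yields integers $x_1,y_1,x_2,y_2$ satisfying \eqref{eq:1}. The essential subtlety, and the step I expect to require the most care, is the correct invocation of the integral local--global theorem together with the $2$-adic Hensel computation.
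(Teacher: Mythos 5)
Your proposal is correct, and it follows the same overall strategy as the paper---an integral local--global principle for an indefinite quaternary quadratic form, with the odd primes handled by the fact that every residue class is a sum of two squares---but the execution differs at two points worth recording. First, the paper does not apply the local--global principle to $Q\oplus Q$ directly: it completes the square and substitutes $\bar{y}_i = y_i/2p$, $\bar{x}_i = x_i+\bar{y}_i$, turning the problem into the diagonal equation $\bar{x}_1^2+\bar{x}_2^2-(1+4np)(\bar{y}_1^2+\bar{y}_2^2)=-1$, to which Watson's congruence-formulated Hasse principle is applied (integer solutions of the latter pull back to integer solutions of \cref{eq:1}). You skip the diagonalization and instead invoke the Eichler--Kneser representation theorem for indefinite forms of rank $\geq 4$ and $m\neq 0$; this is a correct and standard substitute for Watson's theorem, since solvability modulo every $m$ is equivalent to solvability over every $\mathbb{Z}_\ell$. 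Second, and more substantively, the treatment of the prime $2$ genuinely diverges: the paper exhibits the explicit solution $(1,0,1,1)$ of the diagonal equation modulo $8$ and then lifts it through the powers $2^k$, $k\geq 4$, by an induction that keeps $\bar{x}_1$ odd, whereas you exploit the off-diagonal coefficient of $Q$ by setting $y_1=1$, $y_2=0$, so that the one-variable polynomial $px_1^2+x_1+(px_2^2-n+p)$ has derivative $2px_1+1$, a $2$-adic unit, and a single application of Hensel's lemma finishes after the parity check (your case distinction works out: for $p$ odd choose $x_2\equiv n+1 \pmod 2$, for $p$ even choose $x_1\equiv n \pmod 2$). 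This is arguably cleaner than the paper's inductive lifting, and it is the one place where you use the cross term of $Q$ rather than only its signature and discriminant; the trade-off is that your argument rests on a heavier citation (spinor genus theory), while the paper's mod-$m$ computations are elementary and self-contained once Watson's theorem is granted.
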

\begin{proof}
First, one may complete the square and rewrite the summands as follows:
\[
px_i^2 + x_iy_i - ny_i^2  =
p\left(\left(x_i + \frac{y_i}{2p}\right)^2 -
(1 + 4np)\left(\frac{y_i}{2p}\right)^2\right).
\]
Let us substitute $\bar{y}_i = y_i/2p$ and $\bar{x}_i = x_i + \bar{y}_i$.
Then
\begin{align}
\notag \sum_{i=1}^2 px_i^2 + x_iy_i - ny_i^2 & = -p  \quad\Leftrightarrow\\
\tag{$\ddagger$} \bar{x}_1^2 + \bar{x}_2^2 - (1+4np)(\bar{y}_1^2 + \bar{y}_2^2) & = -1. \label{eq:2}
\end{align}
Solving \cref{eq:2} in $\bar{x}_i, \bar{y}_i$ will give an integer solution to \cref{eq:1}.
The equation \Cref{eq:2} is quadratic in four variables, so by a version of 
the Hasse principle due to Watson \cite{watson},
the existence of an integral solution is equivalent to the
existence of a real solution and a solution in $\mathbb{Z}/m\mathbb{Z}$
for every positive $m$. The real solution exists because the associated
symmetric bilinear form is indefinite.
By the Chinese Remainder Theorem it is sufficient to find solutions in
$\mathbb{Z}/m\mathbb{Z}$ for $m = q^k$ a prime power.

For $k=1$, i.e. $m$ prime, it is well-known that every number in $\mathbb{Z}/m\mathbb{Z}$
is a sum of two squares, so there is even a solution to \cref{eq:2} with $\bar{y}_i = 0$.
From there, if $q$ is an odd prime, we can inductively construct a solution in $\mathbb{Z}/q^k\mathbb{Z}$ for $k\geq 2$:
\begin{align*}
\bar{x}_1^2 + \bar{x}_2^2 & \equiv -1 \pmod{q^{k-1}} \quad\Rightarrow \\
\exists\alpha\in\mathbb{Z}:
\bar{x}_1^2 + \bar{x}_2^2 & \equiv -1 + \alpha q^{k-1} \pmod{q^k} \quad\Rightarrow \\
\forall\beta\in\mathbb{Z}:
\bar{x}_1^2 + (\bar{x}_2+\beta q^{k-1})^2 & \equiv -1 + (2\beta\bar{x}_2 + \alpha) q^{k-1} \pmod{q^k}
\end{align*}
Now suppose w.l.o.g. that $\bar{x}_2\not\equiv 0\pmod{q}$, then
taking $\beta$ to be $-\alpha/(2\bar{x}_2)\in\mathbb{F}_q$ gives a solution.

For $q= 2$ and $k \leq 3$,
a solution is given by $(\bar{x}_1, \bar{x}_2, \bar{y}_1, \bar{y}_2) = (1,0,1,1)$.
For $k \geq 4$, one can inductively construct solutions (very much as above)
with odd $\bar{x}_1$. Given such a solution
\[
\bar{x}_1^2 + \bar{x}_2^2 - (1+4np)(\bar{y}_1^2 + \bar{y}_2^2)  \equiv -1 \pmod{2^{k-1}},
\]
we either have
\[
\bar{x}_1^2 + \bar{x}_2^2 - (1+4np)(\bar{y}_1^2 + \bar{y}_2^2)  \equiv -1 \pmod{2^k},
\]
and are done; or we have
\begin{align*}
\bar{x}_1^2 + \bar{x}_2^2 - (1+4np)(\bar{y}_1^2 + \bar{y}_2^2) & \equiv 2^{k-1}-1 \pmod{2^k} \quad\Rightarrow \\
(\bar{x}_1 + 2^{k-2})^2 + \bar{x}_2^2 - (1+4np)(\bar{y}_1^2 + \bar{y}_2^2) & \equiv -1 \pmod{2^k}
\end{align*}
since $\bar{x}_1$ is odd.
\end{proof}
\begin{lemma}\label{lem:top}
Let $A$ be the Seifert form of a genus-minimal Seifert surface $\Sigma$
of a knot $K$.
\begin{enumerate}\renewcommand{\theenumi}{\roman{enumi}}
\item If there is a subgroup of rank two of $H_1(\Sigma)$
on which the matrix of $A$ is
\[
\begin{pmatrix}
0 & 1 \\
0 & x
\end{pmatrix}
\]
for some integer $x$, then $g_t(K) \leq g(K) - 1$.
\item If there is a subgroup of rank two of $H_1(\Sigma)$
on which the matrix of $A$ is
\[
\begin{pmatrix}
p & 1 \\
0 & -n
\end{pmatrix}
\]
with $p, n \geq 0$, then $\widehat{g_t}(K) \leq g(K) - 1/3$.
\end{enumerate}
\end{lemma}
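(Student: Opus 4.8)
The plan is to deduce (ii) from (i), and to prove (i) by locating inside $\Sigma$ a genus-one subsurface whose boundary is a topologically slice knot, which we then trade for a locally flat slice disc in $B^4$ to save one unit of genus.

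For (i), write $L\subset H_1(\Sigma)$ for the given rank-two subgroup with generators $a,b$. Since $A(a,b)=1$ and $A(b,a)=0$, the intersection form $A-A^{T}$ restricts to $a\cdot b=1$; this forces $a$ and $b$ to be primitive and makes $L$ a symplectic direct summand of $H_1(\Sigma)$. By the change-of-coordinates principle for surfaces (the ``elementary fact about surface mapping classes''), I may realize $a$ and $b$ by simple closed curves meeting transversally in a single point, a regular neighbourhood of whose union is a once-punctured torus $T\subset\Sigma$ with separating boundary $\delta:=\partial T$. Regarding $T$ as a Seifert surface for the knot $\delta\subset S^3$, its Seifert matrix is precisely $\left(\begin{smallmatrix}0&1\\0&x\end{smallmatrix}\right)$, so the Alexander polynomial of $\delta$ is $\det(A-tA^{T})\doteq t\doteq 1$. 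By Freedman's disc theorem, $\delta$ therefore bounds a locally flat disc $\Delta\subset B^4$. Pushing the complementary subsurface $\Sigma\setminus\operatorname{int}T$, which has genus $g(K)-1$ and boundary $K\sqcup\delta$, into $B^4$ relative to $K$ and capping off $\delta$ with $\Delta$ produces a locally flat surface for $K$ of genus $g(K)-1$; hence $g_t(K)\leq g(K)-1$.

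For (ii), I apply (i) to the connected sum $K^3$. Its Seifert surface $\Sigma^{\#3}$ is genus-minimal, since Seifert genus is additive, and carries the block form $A^{\oplus 3}$ with three orthogonal copies $\left(\begin{smallmatrix}p&1\\0&-n\end{smallmatrix}\right)$ on pairs $\langle a_i,b_i\rangle$. \cref{lem:numbertheory} supplies integers with $\sum_{i=1}^{2}(px_i^2+x_iy_i-ny_i^2)=-p$; putting $v:=\sum_{i=1}^{2}(x_ia_i+y_ib_i)+a_3$, orthogonality of the blocks gives $A(v,v)=\sum_{i=1}^{2}(px_i^2+x_iy_i-ny_i^2)+p=0$, together with $A(v,b_3)=1$, $A(b_3,v)=0$ and $A(b_3,b_3)=-n$. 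Thus $A$ restricts to $\left(\begin{smallmatrix}0&1\\0&-n\end{smallmatrix}\right)$ on the summand $\langle v,b_3\rangle$, and (i) yields $g_t(K^3)\leq g(K^3)-1=3g(K)-1$. As $\widehat{g_t}$ is multiplicative on connected sums and bounded above by $g_t$, I conclude $\widehat{g_t}(K)=\tfrac13\widehat{g_t}(K^3)\leq\tfrac13 g_t(K^3)\leq g(K)-\tfrac13$.

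The genuinely delicate step is the geometric core of (i): realizing the purely algebraic summand $L$ by an honestly embedded once-punctured torus $T\subset\Sigma$, and then carrying out the trade of $T$ for the Freedman disc $\Delta$ so that the pushed-in complement meets $\Delta$ only along $\delta$ and the resulting surface is locally flat and embedded. By contrast, the number-theoretic input (\cref{lem:numbertheory}), the Alexander polynomial computation, and the passage to the stable genus are routine. It is worth stressing where the vanishing diagonal entry is spent: with a general diagonal $\left(\begin{smallmatrix}w&1\\0&x\end{smallmatrix}\right)$ the Alexander polynomial of $\delta$ would be $wx(1-t)^2+t$, which is trivial exactly when $w=0$; so the hypothesis $A(a,a)=0$ is precisely what lets Freedman's theorem apply to $\delta$.
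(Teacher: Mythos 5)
Your argument follows essentially the same route as the paper's: part (i) via the surjection of the mapping class group onto the symplectic group, the once-punctured torus $T$ around two curves meeting once, triviality of the Alexander polynomial of $\partial T$, and Freedman's disc theorem; part (ii) via exactly the same isotropic vector in $H_1(\Sigma^{\#3})$ supplied by \cref{lem:numbertheory}. One small gap: the hypothesis of (ii) allows $p,n\geq 0$, but \cref{lem:numbertheory} is stated only for $p,n>0$, so your appeal to it does not cover the cases $p=0$ or $n=0$; the paper disposes of these first by reducing them to part (i) (for $n=0$ pass to the basis $(v-pw,\,w)$, on which the matrix becomes $\left(\begin{smallmatrix}0&1\\0&0\end{smallmatrix}\right)$), which even yields the stronger bound $g_t(K)\leq g(K)-1$ there. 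Finally, your closing remark is slightly imprecise: $wx(1-t)^2+t$ is a unit up to powers of $t$ precisely when $wx=0$, not only when $w=0$, so either vanishing diagonal entry would let Freedman's theorem apply.
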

\begin{proof}
\textbf{(i)}
Let $(v, w)$ be a basis of the subgroup in question
with respect to which $A$ has the given matrix.
By subtracting $x\cdot v$ from $w$, we can assume that $x=0$.
We now need to show the existence of simple closed curves $\gamma, \zeta\subset \Sigma$
with $[\gamma] = v, [\zeta] = w$ and $|\gamma\cap \zeta| = 1$.
Let $\Sigma'$ be an (abstract) compact oriented surface of genus $g$ with one boundary component,
and $\gamma', \zeta'$ simple closed curves on $\Sigma'$ with $|\gamma'\cap \zeta'| = 1$.
Then there is an isomorphism $\varphi: H_1(\Sigma') \to H_1(\Sigma)$ respecting the intersection forms
with $\varphi([\gamma']) = v, \varphi([\zeta']) = w$.
Since the mapping class group surjects onto the symplectic group (see e.g. \cite{primer}),
$\varphi$ is realised as the action of a homeomorphism $\widetilde{\varphi}: \Sigma'\to\Sigma$,
and one may take $\gamma=\widetilde{\varphi}(\gamma'), \zeta=\widetilde{\varphi}(\zeta')$.

Let $T$ be the union of closed tubular neighbourhoods of $\gamma$ and $\zeta$ in $\Sigma$, i.e.
the union of two annuli whose cores intersect once.
Then $T$ is a surface of genus one with one boundary component.
Since $T$ is a subsurface of $\Sigma$, the Seifert form of $T$ is simply the restriction of $A$ to $H_1(T)$,
and so with respect to the basis $([\gamma], [\zeta])$ of $H_1(T)$, its matrix is 
\[
\begin{pmatrix}
0 & 1 \\
0 & 0
\end{pmatrix}.
\]
Therefore the Alexander polynomial of $\partial T$ is trivial, and thus by Freedman's disc theorem~\cite{freedman2,freedman}
there is a locally flat embedding of a disc $D$ into $B^4$ with $\partial D = \partial T = D \cap S^3$.
Replace $T$ with $D$, i.e. remove $T$ from $\Sigma$ and glue $D$ to the resulting boundary component. This yields
a connected locally flat slice surface $S$ of $K$ with genus one less than the genus of $\Sigma$.\\

\textbf{(ii)}
If $p=0$ or $n=0$, the hypothesis of part (i) of this Lemma
is satisfied, and we even have $\widehat{g_t}(K) \leq g_t(K) \leq g(K) - 1$.
So let us assume $p, n > 0$.
The idea is to apply part (i) to the knot $K^3$ with genus-minimal Seifert surface $\Sigma^3 = \Sigma \# \Sigma \# \Sigma$.
Fix the basis of the rank-two subgroup of $H_1(\Sigma)$ with respect to which $A$ has the given matrix.
Three copies of this basis concatenated give a basis of a rank-six subgroup $V$ of
$H_1(\Sigma)^{\oplus 3} \cong H_1(\Sigma^3)$.
By \cref{lem:numbertheory}, there are $x_1, x_2, y_1, y_2\in\mathbb{Z}$ such that
\[
\sum_{i=1}^2 px_i^2 + x_iy_i - ny_i^2 = -p.
\]
Consider the rank-two subgroup of $V$ generated by the vectors
$(1,0,x_1,y_1,x_2,y_2)^{\top}$ and $(0,1,0,0,0,0)^{\top}$.
On this subgroup, $A$ has the matrix given in part (i). Hence
\[
g_t(K^3) \leq g(K^3) - 1 \quad\Rightarrow\quad
\widehat{g_t}(K) \leq g(K) - 1/3.
\]
\end{proof}
\Cref{thm:main} follows from \Cref{lem:hom}(ii) and \cref{lem:top}(ii),
since the canonical Seifert surface of a homogeneous diagram is genus-minimal \cite{cromwell}.
In the smooth setting, \cref{thm:smooth} follows from \cref{lem:hom}(i) and the following:
\begin{lemma}
Let $\Sigma$ be a genus-minimal Seifert surface of a knot $K$.
\begin{enumerate}\renewcommand{\theenumi}{\roman{enumi}}
\item If $\Sigma$ contains a non-separating unknotted simple closed curve $\gamma$ with induced framing $0$, then
\[
g_s(K) \leq g(K) -1.
\]
\item If $\Sigma$ contains two
unknotted curves $\gamma_{\pm}$ with respective framings $p$ and $-n$, where $p, n> 0$, then
\[
\widehat{g_s}(K) < g(K).
\]
\end{enumerate}
\end{lemma}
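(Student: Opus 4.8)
The plan is to prove both parts by smooth ambient surgery (compression) on Seifert surfaces pushed into $B^4$, replacing the use of Freedman's disc theorem from the proof of \cref{lem:top} by the genuinely smooth fact that a framing-$0$ unknot bounds a smooth disc.

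For part (i) I would not push $\Sigma$ into $B^4$ right away. Since $\gamma$ is unknotted it bounds an embedded disc $D_0\subset S^3$, and the $0$-framing of an unknot bounding a disc agrees with the induced framing of $\gamma$ in $\Sigma$, which is assumed to be $0$. Let $A\subset\Sigma$ be an annular neighbourhood of $\gamma$; its two boundary curves $\gamma_+,\gamma_-$ are parallel $0$-framed push-offs of $\gamma$, so $\operatorname{lk}(\gamma_+,\gamma_-)=0$ and each bounds a parallel copy of $D_0$. Set $\Sigma_0=\Sigma\setminus\operatorname{int}(A)$, a surface in $S^3$ with boundary $K\sqcup\gamma_+\sqcup\gamma_-$, and cap $\gamma_\pm$ off with two parallel copies $D_\pm$ of $D_0$ whose interiors are pushed into the open ball so as to be disjoint from each other and from $\Sigma_0$; the vanishing framing is exactly what makes the two caps disjoint. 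The result $S=\Sigma_0\cup D_+\cup D_-$ is a smooth properly embedded surface in $B^4$ with $\partial S=K$. An Euler characteristic count (cutting along a curve leaves $\chi$ unchanged, while the two caps add $2$) gives $\chi(S)=\chi(\Sigma)+2$, and since $\gamma$ is non-separating, $S$ is connected with one boundary component; hence $g(S)=g(\Sigma)-1=g(K)-1$, using that $\Sigma$ is genus-minimal. This yields $g_s(K)\leq g(K)-1$.

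For part (ii) the idea is to manufacture a framing-$0$ unknotted non-separating curve inside a connected sum and then invoke part (i). First note that a framing $\neq 0$ forces $[\gamma_\pm]\neq 0$ in $H_1(\Sigma)$ (otherwise the diagonal Seifert entry would vanish), so $\gamma_+$ and $\gamma_-$ are non-separating. Set $m=p+n$ and work with $K^m$ and its Seifert surface $\Sigma^m$, which is genus-minimal of genus $m\,g(K)$ by additivity of the Seifert genus under connected sum. In $\Sigma^m$, take $n$ copies of $\gamma_+$ and $p$ copies of $\gamma_-$, each living in a distinct summand, and band them together along untwisted bands on $\Sigma^m$ running through the connect-sum regions into a single simple closed curve $\gamma$. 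As a connected sum of unknots, $\gamma$ is unknotted; its induced framing is the sum of the individual framings (off-diagonal linkings between distinct summands vanish and the bands are untwisted), namely $n\cdot p+p\cdot(-n)=0$; and its homology class is a sum of nonzero classes in distinct summands, hence nonzero, so $\gamma$ is non-separating. Applying part (i) to $K^m$ gives $g_s(K^m)\leq m\,g(K)-1$, and subadditivity of $g_s$ under connected sum (Fekete's lemma) gives $\widehat{g_s}(K)\leq\tfrac1m g_s(K^m)\leq g(K)-\tfrac1m<g(K)$.

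The main obstacle, and the reason the smooth statement is only qualitative whereas \cref{lem:top} is quantitative, is that compression requires an honestly unknotted curve of framing $0$, not merely a homology class of the right Seifert type. Consequently the number-theoretic \cref{lem:numbertheory}, which produces a suitable class inside three copies, is of no use here: one cannot guarantee that the resulting class is represented by an unknotted simple closed curve. This forces the less economical construction above, which balances the framings $p$ and $-n$ by brute force using $p+n$ summands and therefore yields no explicit gap beyond strict positivity.
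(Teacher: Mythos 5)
Your proposal is correct and takes essentially the same route as the paper: part (i) is the paper's one-sentence ambient-surgery argument (remove a tubular neighbourhood of $\gamma$, glue in two pushed-in copies of a disc bounded by the unknotted framing-$0$ curve), and part (ii) constructs exactly the paper's curve $\gamma_+^n \# \gamma_-^p$ of framing $0$ in $\Sigma^{n+p}$ and applies part (i) to obtain $\widehat{g_s}(K) \leq g(K) - \frac{1}{n+p} < g(K)$. Your extra verifications (non-separation via the nonvanishing homology class, genus additivity under connected sum, subadditivity giving the stable bound) merely fill in details the paper leaves implicit.
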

\begin{proof}
\textbf{(i)} This is just an application of ambient surgery:
remove a tubular neighbourhood of $\gamma$ in $\Sigma$, and glue in two copies of a slice disc of the curve $\gamma$.\\
\textbf{(ii)} In $\Sigma^{n+p}$, one finds the unknotted curve $\gamma_+^n \# \gamma_-^p$
with framing $0$. By part (i) of this Lemma,
\[
g_s(K^{n+p}) \leq g(K^{n+p}) - 1 \quad\Rightarrow\quad
\widehat{g_s}(K) \leq g(K) - \frac{1}{n+p}.
\]
\end{proof}

\section{Applications and examples}
\begin{prop}
Let $K$ be a knot with a Seifert surface of genus one that contains a simple
closed curve of induced framing $1$.
If the signature of $K$ vanishes, then $\widehat{g_t}(K) \leq 2/3$.
\end{prop}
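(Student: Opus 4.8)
The plan is to exhibit a basis of the first homology of the given genus-one surface $\Sigma$ with respect to which the Seifert form $A$ has exactly the shape required by \cref{lem:top}(ii), and then to quote that lemma. We may assume $\Sigma$ is genus-minimal, i.e. $g(K)=1$; otherwise $g(K)=0$, so $K$ is the unknot and $\widehat{g_t}(K)=0\leq 2/3$.

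First I would choose good coordinates on $H_1(\Sigma)\cong\mathbb{Z}^2$. Let $\gamma$ be the given curve of induced framing $1$. A separating simple closed curve on a once-punctured torus is null-homologous and therefore has framing $0$, so $[\gamma]$ is non-zero; being a simple closed curve, its class is then primitive, and I extend it to a basis $([\gamma],[\delta])$ with $[\gamma]\cdot[\delta]=1$. Since the antisymmetric part $A-A^{\top}$ of the Seifert form is the intersection form and the self-linking of $\gamma$ equals $1$, in this basis
\[
A=\begin{pmatrix} 1 & a \\ a-1 & c\end{pmatrix}
\]
for some $a,c\in\mathbb{Z}$.

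Next I would translate the signature hypothesis. The signature of $K$ is that of $A+A^{\top}$, whose determinant $4c-(2a-1)^2$ is odd, hence non-zero; so $\sigma(K)=0$ is equivalent to $4c<(2a-1)^2=4a^2-4a+1$, which forces the integer inequality $c\leq a^2-a$, i.e. $n:=a^2-a-c\geq 0$. This single inequality is the only place where the hypothesis $\sigma(K)=0$ enters.

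Finally I would keep $\gamma$ and modify $\delta$: conjugating by $P=\left(\begin{smallmatrix}1&1-a\\0&1\end{smallmatrix}\right)\in SL_2(\mathbb{Z})$ gives
\[
P^{\top}AP=\begin{pmatrix} 1 & 1 \\ 0 & -n\end{pmatrix},
\]
which is precisely the matrix of \cref{lem:top}(ii) with $p=1$ and $n\geq 0$. That lemma then delivers $\widehat{g_t}(K)\leq g(K)-1/3=2/3$. The verification of $P^{\top}AP$ is a routine calculation; I expect the only substantive idea to be the decision to retain the framing-one curve as the first basis vector, after which the determinant constraint imposed by $\sigma(K)=0$ supplies $n\geq 0$ automatically, so that no further number theory (such as \cref{lem:numbertheory}) is needed here.
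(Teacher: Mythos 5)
Your proof is correct and follows essentially the same route as the paper's: extend the class of the framing-one curve to a basis, perform the unipotent change of basis to reach $\left(\begin{smallmatrix}1 & 1\\ 0 & -n\end{smallmatrix}\right)$, deduce $n\geq 0$ from the vanishing signature, and apply \cref{lem:top}(ii). The only differences are that you spell out details the paper leaves implicit (primitivity of the curve's class, the determinant argument for $n\geq 0$, and the trivial case where $\Sigma$ is not genus-minimal), which is fine.
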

\begin{proof}
Extend the curve's homology class $v$ to a basis $(v,w)$.
With respect to that basis the Seifert form $A$ has the matrix
\[
\begin{pmatrix}
1 & x+1 \\
x & *
\end{pmatrix}
\]
for some integer $x$. Change to the basis $(v, w-x\cdot v)$, on which $A$ has the matrix
\[
\begin{pmatrix}
1 & 1 \\
0 & -n
\end{pmatrix}
\]
for some integer $n$. Since the signature vanishes, we have $n \geq 0$.
Now apply \cref{lem:top}(ii).
\end{proof}
The following is a strengthening of \cite[Corollary 1]{baader}.
\begin{prop}
Let $K$ be a knot with a genus-minimal Seifert surface
that contains simple closed curves of induced framings $1$ and $-1$.
Then $g(K) - \widehat{g_t}(K) \geq 1/2$.
\end{prop}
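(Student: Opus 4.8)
The plan is to deduce the statement from \cref{lem:top}(i) applied to the square $K^2$, mirroring the mechanism of \cref{lem:top}(ii) but gaining the sharper constant $1/2$ in place of $1/3$. A gain is possible because the hypotheses force the two diagonal Seifert-form values to be exactly $+1$ and $-1$, so a class of vanishing self-linking can be produced already in $\Sigma^2 = \Sigma \# \Sigma$ rather than in $\Sigma^3$. Writing $v_+, v_-$ for the homology classes of the two given curves, so that $A(v_+,v_+)=1$ and $A(v_-,v_-)=-1$, I would form the class $u = v_+^{(1)} + v_-^{(2)} \in H_1(\Sigma^2) \cong H_1(\Sigma)^{\oplus 2}$. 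Since the Seifert form of a connected sum is the orthogonal direct sum of the summands' forms, $A(u,u) = 1 + (-1) = 0$.

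Since \cref{lem:top}(i) only asks for a rank-two subgroup carrying the matrix $\begin{pmatrix} 0 & 1\\ 0 & x\end{pmatrix}$, I would next produce a partner $w$ with $A(u,w)=1$ and $A(w,u)=0$; then $(u,w)$ spans the required subgroup, the two being independent because $u$ has a nonzero component in the second summand (as $v_-\neq 0$) while $w$ will lie in the first. I expect this to be the main obstacle: unlike in \cref{lem:hom}(ii), the proposition supplies only the diagonal entries, so the off-diagonal structure must be created by hand. The key points are that $v_+$ is primitive (a simple closed curve of nonzero framing is non-separating, hence represents a primitive class) and that the intersection form $A - A^{\top}$ on $H_1(\Sigma)$ is unimodular. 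I would take $w = w_+^{(1)}$ in the first summand, reducing the task to finding $w_+ \in H_1(\Sigma)$ with $A(v_+, w_+) = 1$ and $A(w_+, v_+) = 0$.

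To find such a $w_+$, consider the subgroup $L = \{(A(v_+, z), A(z, v_+)) : z \in H_1(\Sigma)\} \subseteq \mathbb{Z}^2$. It contains $(1,1)$, obtained from $z = v_+$. The difference $A(v_+, z) - A(z, v_+)$ equals the intersection number $\langle v_+, z\rangle$, and since $v_+$ is primitive and the intersection form unimodular, there is a $z_0$ with $\langle v_+, z_0\rangle = 1$, placing a vector $(a, a-1)$ in $L$. Subtracting $(a-1)\cdot(1,1)$ yields $(1,0) \in L$, which is exactly the desired $w_+$. With $u$ and $w$ in hand, $A$ restricts on $\langle u, w\rangle$ to $\begin{pmatrix} 0 & 1\\ 0 & A(w_+,w_+)\end{pmatrix}$, so \cref{lem:top}(i) gives $g_t(K^2) \leq g(K^2) - 1 = 2g(K) - 1$, where $\Sigma^2$ is genus-minimal by additivity of the genus under connected sum. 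Finally, subadditivity of $g_t$ under connected sum yields $\widehat{g_t}(K) \leq \tfrac12 g_t(K^2) \leq g(K) - \tfrac12$, which is the claim.
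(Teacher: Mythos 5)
Your proposal is correct and follows essentially the same route as the paper: both pass to $\Sigma\#\Sigma$, use the isotropic class given by the sum of the $+1$-framed class in the first copy and the $-1$-framed class in the second, produce a dual class supported in the first copy, and apply \cref{lem:top}(i) to $K^2$. Your lattice argument producing $w_+$ is just a homological restatement of the paper's choice of a curve $\zeta$ intersecting $\gamma$ once and the basis change $[\zeta]-y[\gamma]$.
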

\begin{proof}
Let $\gamma$ be a curve of induced framing $1$. Let $\zeta$ be
another curve that intersects $\gamma$ once. Then, for some $x,y\in\mathbb{Z}$,
the Seifert form of the Seifert surface $\Sigma$ has the following matrix on $\langle [\gamma], [\zeta]\rangle$:
\[
\begin{pmatrix}
1 & y + 1 \\
y & x
\end{pmatrix}.
\]
Consider $\Sigma\# \Sigma$, and let $\eta$ be a curve of induced framing $-1$ in the second copy of $\Sigma$.
Then the Seifert form has the following matrix on the subgroup $\langle [\gamma] + [\eta], [\zeta] - y\cdot [\gamma]\rangle$:
\[
\begin{pmatrix}
0 & 1 \\
0 & *
\end{pmatrix},
\]
and we are done by \cref{lem:top}(i).
\end{proof}
Note that in both of the previous propositions, the hypotheses can be phrased
in purely homological terms, without reference to curves.
\begin{example}
Let $K_n$ denote the $n$-twist knot, i.e. the two-bridge knot corresponding to the fraction $(4n+1)/2$.
In Rolfsen's table, $K_1 = 4_1, K_2 = 6_1$ etc.
The knot $K_n$ is topologically slice if and only if $n\in \{0,2\}$ \cite{cg},
and for all other $n$ we have $g(K_{n}) = g_t(K_{n}) = 1$.
On the other hand, our main result shows that $\widehat{g_t}(K_{n}) \leq 2/3$ for all $n\geq 0$.

Let us have a closer look at $K_5$. Let $\Sigma$ be its standard genus one Seifert surface.
In what follows, we show that $\Sigma\# \Sigma$ does not contain a simple closed curve $\gamma$
of induced framing $0$.
This demonstrates that while it might a priori well be possible to improve the constant of $1/3$ in \cref{thm:main},
possibly even up to $1$, this would require a new approach.

Taylor \cite{taylor} shows that if a knot has a genus-minimal Seifert surface without
a curve of induced framing $0$, then the genus and smooth slice genus of that knot are equal.
It is generally believed that this results hold in the topological category as well.
This would imply $g_t(K_5\# K_5) = 2$.

The Seifert form of $\Sigma$ has a matrix
\[
\begin{pmatrix}
1 & 1 \\
0 & -5
\end{pmatrix}.
\]
So the homology class of a simple closed curve $\gamma\subset\Sigma\#\Sigma$
with induced framing 0 is a non-trivial solution $(x_1, y_1, x_2, y_2) \in\mathbb{Z}^4$
to the equation
\[
\sum_{i=1}^2 x_i^2 + x_iy_i - 5y_i^2 = 0.
\]
Similarly as in the proof of \cref{lem:numbertheory}, this leads
to a non-trivial solution $\bar{x}_i, \bar{y}_i\in\mathbb{Z}$ of the equation
\[
 \bar{x}_1^2 + \bar{x}_2^2    = 21(\bar{y}_1^2 + \bar{y}_2^2).
 \]
According to a theorem of Fermat, the prime factorisation of a sum of two squares
contains primes that are equivalent to $3\pmod{4}$ an even number of times.
Hence the maximal power of $3$ that divides the left hand side of the above equation
is even, and so is the maximal power of $3$ dividing $(\bar{y}_1^2 + \bar{y}_2^2)$;
thus the maximal power of $3$ dividing the right hand side is odd, which is a contradiction.
\end{example}
\begin{example}
One finds the following alternating non-positive prime knots of genus two
for which Taylor's obstruction (as in the previous example) implies that the topological slice genus equals the genus:
\[
12a_{787}, 13a_{4857}, 14a_{8117}, 14a_{9067}, 15a_{35895}, 15a_{35937}, 15a_{51786}, 15a_{76589}
\]
For these knots, our main result shows that the stable topological slice genus
is at most $5/3$.

It seems probable that similar examples exist for genus three or higher.
However, there are no workable tools at our disposition to prove $g_t = g$ for knots with
non-maximal signature and $g \geq 3$.
\end{example}

\bibliographystyle{myamsalpha}
\bibliography{stable_alt}
\end{document}